\newtheorem{theorem}{Theorem}[section]
\newtheorem{introtheorem}{Theorem}
\newtheorem{proposition}[theorem]{Proposition}
\newtheorem{introproposition}[introtheorem]{Proposition}
\newtheorem{definition}[theorem]{Definition}
\newtheorem{corollary}[theorem]{Corollary}
\newtheorem{lemma}[theorem]{Lemma}
\theoremstyle{definition}
\numberwithin{equation}{section}
\renewcommand{\bold}[1]{\medskip \noindent {\bf #1 }\nopagebreak}
\DeclareMathOperator{\Aut}{Aut}
\DeclareMathOperator{\Sym}{Sym}
\DeclareMathOperator{\Homeo}{Homeo}
\DeclareMathOperator{\Comm}{Comm}
\DeclareMathOperator{\comIndex}{c}
\DeclareMathOperator{\Z}{\mathbb{Z}}
\newcommand{\comGrowth}{{\bf c}}
\DeclareMathOperator{\stabilizer}{stab}
\newcommand{\addingMachine}{\mathcal{A}}
\newcommand{\setU}
\def\Z{\mathbb{Z}}
\def\N{\mathbb{N}}
\providecommand{\bysame}{\leavevmode\hbox to3em{\hrulefill}\thinspace}
\providecommand{\MR}{\relax\ifhmode\unskip\space\fi MR }
\providecommand{\href}[2]{#2}
\begin{document}

\title{Commensurability growth of branch groups}

\author{Khalid Bou-Rabee}
\address{Department of Mathematics, The City College of New York}
\curraddr{Department of Mathematics,
Convent Ave at 138th Street, New York, NY, 10031}
\email{khalid.math@gmail.com}
\thanks{KB supported in part by NSF Grant \#1405609.}

\author{Rachel Skipper}
\address{Binghamton University}
\email{skipper@math.binghamton.edu}

\author{Daniel Studenmund}
\address{University of Notre Dame}
\email{dstudenm@nd.edu}
\thanks{DS supported in part by NSF Grant \#1547292.}

\subjclass[2000]{Primary 20E26, 20B07; Secondary 20K10}

\date{August 26, 2018.}


\keywords{commensurators, branch groups, residually finite groups}

\begin{abstract}
Fixing a subgroup $\Gamma$ in a group $G$, the commensurability growth function assigns to each $n$ the cardinality of the set of subgroups $\Delta$ of $G$ with $[\Gamma: \Gamma \cap \Delta][\Delta : \Gamma \cap \Delta] = n$. 
For pairs $\Gamma \leq A$, where $A$ is the automorphism group of a $p$-regular tree and $\Gamma$ is finitely generated, we show that this function can take on finite, countable, or uncountable cardinals.
For almost all known branch groups $\Gamma$ (the first Grigorchuk group, the twisted twin Grigorchuk group, Pervova groups, Gupta-Sidki groups, etc.) acting on $p$-regular trees, this function is precisely $\aleph_0$ for any $n = p^k$.
\end{abstract}

\maketitle

\section*{Introduction}

Two subgroups $\Delta_1$ and $\Delta_2$ of a group $G$ are
\emph{commensurable} if their \emph{commensurability index} 
\[
  \comIndex(\Delta_1, \Delta_2) := [\Delta_1 : \Delta_1 \cap
  \Delta_2][\Delta_2 : \Delta_1 \cap \Delta_2]
\] 
is finite.
For a pair of groups $\Gamma \leq G$, the {\em commensurability growth function} $\N \to \N \cup \{ \infty \}$ assigns to each $n \in \N$ the cardinality
 $$\comGrowth_n(\Gamma,G) := | \{ \Delta \leq G : \comIndex\left(\Gamma, \Delta \right) = n \} |.$$
This function was first systematically studied in \cite{BD18}, where it was used to give regularity results on the structure of arithmetic lattices in a unipotent algebraic group.
Here, we continue this study to pairs of groups naturally arising from the class of finitely generated residually finite groups.
This extends the study of commensurability growth beyond the class of linear groups.

Associated to any residually finite group $\Gamma$ are many rooted finite-valent trees $T$ where $\Gamma \leq \Aut(T)$, the automorphism group of $T$.
Such pairs are particularly beautiful and useful when the rooted tree is $d$-regular, denoted $T_d$, and the subgroup is \emph{branch}. For instance, the \emph{first Grigorchuk group}, a branch subgroup of $\Aut(T_2)$, has intermediate growth \cite{MR764305}, is commensurable with its direct product, and is a counter-example to the Burnside Problem \cite{MR1786869}.
\emph{To what extent does the sequence $\{\comGrowth_n(\Gamma, \Aut(T_d))\}_{n=1}^\infty$ distinguish branch subgroups of $\Aut(T_d)$ among the collection of subgroups of $\Aut(T_d)$?}

A simple example of a non-branch subgroup of $\Aut(T_d)$ is the embedding of $\Z$ into $\Aut(T_2)$ known as the \emph{binary adding machine}. See \S \ref{sec:proofaddingmachine} for the definition.
\begin{introproposition} \label{prop:addingmachine}
Let $\addingMachine$ be the \emph{binary adding machine subgroup} of $\Aut(T_2)$.
Then for every natural number $k$, $\comGrowth_{2^k}(\addingMachine, \Aut(T_2)) = \aleph_1$. On the other hand, there exists an infinite dihedral group $H \leq \Aut(T_2)$, containing $\addingMachine$ as a subgroup of index two such that
$\comGrowth_2(H, \Aut(T_2)) = 3$.
\end{introproposition}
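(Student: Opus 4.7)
The plan is to work via the boundary action of $\Aut(T_2)$ on $\partial T_2 \cong \Z_2$, where $a$ acts as the translation $x \mapsto x+1$ and $\Aut(T_2)$ sits faithfully inside the group of $2$-adic isometries. The first step is to identify the normalizer of $\addingMachine$: the centralizer $C_{\Aut(T_2)}(\addingMachine)$ equals the closure $\overline{\addingMachine}$, which acts on $\Z_2$ by translations $t_\alpha: x \mapsto x + \alpha$ for $\alpha \in \Z_2$, and the only other normalizing elements are the involutions $g_\beta : x \mapsto \beta - x$ for $\beta \in \Z_2$, which are the unique automorphisms of $T_2$ inverting $a$. Thus $N_{\Aut(T_2)}(\addingMachine) \cong \Z_2 \rtimes \Z/2$.

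For the first claim, for each $\beta \in \Z_2$ I would set $\Delta_\beta^{(k)} := \langle a^{2^{k-1}}, g_\beta \rangle$. Since $g_\beta$ has order $2$ and inverts $a^{2^{k-1}}$, this is an infinite dihedral extension of $\addingMachine^{2^{k-1}}$; because $g_\beta$ is not a translation, $\Delta_\beta^{(k)} \cap \addingMachine = \addingMachine^{2^{k-1}}$, so $\comIndex(\addingMachine, \Delta_\beta^{(k)}) = 2^{k-1} \cdot 2 = 2^k$. Two such subgroups agree precisely when $g_{\beta_1} g_{\beta_2} = t_{\beta_1 - \beta_2}$ lies in $\addingMachine^{2^{k-1}}$, i.e.\ when $\beta_1 - \beta_2 \in 2^{k-1}\Z$. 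Since $2^{k-1}\Z$ is countable and $\Z_2$ has cardinality $2^{\aleph_0}$, the family $\{\Delta_\beta^{(k)}\}$ has cardinality $\aleph_1$, matched by the trivial upper bound $|\Aut(T_2)|$.

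For the second claim, take $H := \langle \addingMachine, g_0 \rangle \cong D_\infty$ where $g_0: x \mapsto -x$. The group $H$ has exactly three subgroups of index two, namely $\addingMachine$, $\langle a^2, g_0 \rangle$, and $\langle a^2, a g_0 \rangle$, all of which lie in $\Aut(T_2)$ and contribute to $\comGrowth_2(H,\Aut(T_2))$. The main step is to rule out any $\Delta \supsetneq H$ of index two. Such a $\Delta$ would normalize $H$ and hence also normalize its characteristic subgroup $\addingMachine$ (the unique torsion-free subgroup of index $2$ in $H$), so $\Delta \leq N_{\Aut(T_2)}(\addingMachine)$. A nontrivial coset representative $g = t_\alpha$ would satisfy $g^2 = t_{2\alpha} \in H$, forcing $2\alpha \in \Z$ and hence $\alpha \in \Z$ (since $\Z_2/\Z$ has no $2$-torsion), contradicting $g \notin H$; a representative $g = g_\beta$ would satisfy $g g_0 g^{-1} = g_{2\beta} \in H$, again forcing $\beta \in \Z$, another contradiction. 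This rules out all overgroups and gives $\comGrowth_2(H,\Aut(T_2)) = 3$. The principal obstacle is this last step: checking that $\addingMachine$ is genuinely characteristic in $H$ and carefully tracking the $2$-adic arithmetic to exclude every candidate coset representative.
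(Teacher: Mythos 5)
Your proof is correct, and it reaches the same destination as the paper by a recognizably parallel but differently packaged route. The paper works with the wreath-recursive normal form of Bartholdi--Sidki: it quotes their Theorem 4.13 to produce the normalizing elements $\tau^x u_{-1}$ (which are exactly your reflections $g_\beta$ in the affine model $z \mapsto \beta - z$ on $\partial T_2 \cong \Z_2$), and it quotes their Theorem 4.12 ($H$ is self-normalizing) to kill index-two overgroups of $H$ in one line. You instead work entirely in the boundary/affine picture, deduce everything from the single fact that $C_{\Aut(T_2)}(\addingMachine) = \overline{\addingMachine}$ acting by translations, and re-derive the self-normalization of $H$ by the coset-representative computation with $t_\alpha$ and $g_\beta$ (using that $\addingMachine$ is characteristic in $H$ and that $\Z_2/\Z$ has no $2$-torsion). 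Your treatment of the first claim is in one respect more complete than the paper's: the paper only exhibits the index-two extensions (the case $k=1$) and asserts that this suffices, whereas your family $\Delta^{(k)}_\beta = \langle a^{2^{k-1}}, g_\beta\rangle$ explicitly realizes commensurability index $2^k$ for every $k \geq 1$, with the correct count $|\Z_2/2^{k-1}\Z|$.

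Two small caveats, neither fatal. First, the identification of $C_{\Aut(T_2)}(\addingMachine)$ with the translation group $\Z_2$ is the one genuinely nontrivial input you assert rather than prove; it needs the continuity of boundary actions plus density of $\Z$ in $\Z_2$ (or a citation --- it is precisely what the paper outsources to Bartholdi--Sidki). Second, your ``trivial upper bound $|\Aut(T_2)|$'' is slightly too quick: a priori the set of subgroups of $\Aut(T_2)$ has cardinality $2^{2^{\aleph_0}}$, so you should note that any $\Delta$ commensurable with $\addingMachine$ is virtually cyclic, hence finitely generated, so that there are at most $|\Aut(T_2)|^{\aleph_0} = 2^{\aleph_0}$ of them; this is exactly the remark the paper makes. (You share with the paper the identification of $2^{\aleph_0}$ with $\aleph_1$, so I do not count that against you.)
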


Our proof of Proposition \ref{prop:addingmachine}, given in \S \ref{sec:proofaddingmachine}, uses results from \cite{MR2171236}.
For every natural number $n$, the group $\addingMachine$ acts transitively on vertices of distance $n$ from the root in $T_2$. Thus, while $\addingMachine$ in some sense fills up $\Aut(T_2)$, there are many subgroups of finite commensurability index with $\addingMachine$.

In contrast to this behavior, our main result shows that most well-studied examples of branch groups sitting inside the Sylow pro-$p$ subgroup of $\Aut(T_p)$, where $p$ is a prime, have the same commensurability growth values.
These examples include the first Grigorchuk group, the twisted twin of the Grigorchuk group, the Pervova groups, the Gupta-Sidki $p$-groups, the Fabrykowski-Gupta group and an infinite family of generalizations of the Fabrykowski-Gupta group, and GGS groups with non-constant accompanying vector. 
Important to our proof is that all these examples satisfy the \emph{rigid congruence subgroup property}, a weakening of the usual congruence subgroup property. See \S \ref{sec:preliminaries} for definitions of these groups and their properties.

\begin{introtheorem} \label{thm:main}
Let $\Gamma$ be a finitely generated, self-similar, regular branch group over a branching subgroup $K$ in $\Aut(T_p)$. Suppose $\Gamma$ is contained in the Sylow pro-$p$ subgroup of $\Aut(T_p)$ and satisfies the rigid congruence subgroup property.
Then $\comGrowth_{p^k}(\Gamma, \Aut(T_p)) = \aleph_0$ for all $k$.
\end{introtheorem}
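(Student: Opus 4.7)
My plan is to establish both $\comGrowth_{p^k}(\Gamma,\Aut(T_p))\leq\aleph_0$ and $\comGrowth_{p^k}(\Gamma,\Aut(T_p))\geq\aleph_0$ separately.

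For the upper bound, I would first observe that any $\Delta$ with $\comIndex(\Gamma,\Delta)=p^k$ lies in the commensurator $\Comm_{\Aut(T_p)}(\Gamma)$, and then argue this commensurator is countable. The key tool is the conjugation map $\Comm_{\Aut(T_p)}(\Gamma)\to\Comm(\Gamma)$ into the abstract commensurator. The target is countable because $\Gamma$ is finitely generated: there are countably many finite-index subgroups, and countably many homomorphisms between finitely generated groups. The kernel of this map is the union over finite-index $L\leq\Gamma$ of the $\Aut(T_p)$-centralizers $C_{\Aut(T_p)}(L)$. By the rigid CSP, each such $L$ contains $\mathrm{Rist}_\Gamma(n)$ for some $n$, which in the regular branch case contains the product $K\times\cdots\times K$ of $p^n$ rigid copies of $K$ at level-$n$ vertices. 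This product has trivial centralizer in $\Aut(T_p)$: any $g$ centralizing a nontrivial $K_v$ must fix $v$ (otherwise conjugation would move the support, contradicting centralization) and have trivial section at $v$ (since $K$ itself has trivial centralizer in $\Aut(T_p)$ by the branch property), so $g$ is trivial. Hence the conjugation map is injective and $\Comm_{\Aut(T_p)}(\Gamma)$ is countable. Within a countable ambient group, each $\Delta$ with $\comIndex(\Gamma,\Delta)=p^k$ is determined by one of countably many choices of $L=\Gamma\cap\Delta$ together with finitely many coset representatives drawn from a countable set, yielding at most $\aleph_0$ such $\Delta$.

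For the lower bound I plan to use self-similarity and the regular branch structure to exhibit an infinite family of pairwise distinct commensurable subgroups. For each sufficiently large $n$, I would construct an element $\alpha_n\in\Aut(T_p)$ supported on the subtree rooted at a chosen level-$n$ vertex $v_n$ that commensurates $\Gamma$, and set $\Delta_n:=\langle\Gamma,\alpha_n\rangle$. Since the subtree at $v_n$ carries a self-similar copy of $\Gamma$ and a rigid copy of $K$, one chooses an automorphism of the subtree normalizing the copy of $K$ and lifts it by trivial action elsewhere; the regular branch structure ensures this lift commensurates $\Gamma$ globally. The $\Delta_n$ are pairwise distinct because their distinguishing elements live at distinct depths, and each contributes $\comIndex(\Gamma,\Delta_n)=p$. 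For general $k\geq 1$ one combines $k$ such deep perturbations at different vertices, or mixes them with the (finitely many) index-$p^j$ subgroups of $\Gamma$ for various $j\leq k$, to realize commensurability index exactly $p^k$.

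The main obstacle I foresee is the lower-bound construction. Specifically, verifying that each $\alpha_n$ genuinely commensurates $\Gamma$ (not merely its restriction to a subtree), that the resulting $\Delta_n$ have commensurability index exactly $p^k$, and that they are pairwise distinct, requires a careful interplay between self-similarity (providing copies of $\Gamma$ at each subtree), regular branching over $K$ (providing rigid copies of the branching subgroup at every vertex), and the rigid CSP (controlling finite-index subgroups via their rigid stabilizer content). This is where the full strength of the hypotheses is most delicately used.
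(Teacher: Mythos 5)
Your upper bound is essentially sound and runs parallel to the paper's: the paper also reduces to countability of the abstract commensurator $\Comm(\Gamma)$ of the finitely generated group $\Gamma$, citing R\"over's theorem that $\Comm_{\Homeo(\partial T_p)}(\Gamma)\cong\Comm(\Gamma)$ rather than proving injectivity of the conjugation map by hand via triviality of centralizers of finite-index subgroups. Your version is a reasonable, slightly more self-contained alternative (the centralizer argument is standard for branch groups and does not even need the rigid CSP), and your final counting step is fine since each commensurable $\Delta$ is finitely generated inside a countable group.

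The lower bound, however, has a genuine gap, and it sits exactly where you flagged the ``main obstacle.'' The problem is that $\alpha_n$ commensurating $\Gamma$ does \emph{not} imply that $\Delta_n=\langle\Gamma,\alpha_n\rangle$ is commensurable with $\Gamma$: the commensurator of $\Gamma$ in $\Aut(T_p)$ is typically enormously larger than the union of groups commensurable with $\Gamma$, and a single commensurating element can generate, together with $\Gamma$, a group containing $\Gamma$ with infinite index. So normalizing the rigid copy of $K$ at $v_n$ buys you $\comIndex(\Gamma,\alpha_n\Gamma\alpha_n^{-1})<\infty$ but not $[\Delta_n:\Gamma]<\infty$, let alone $=p$. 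The paper avoids this by taking the new element $\gamma$ from the very specific group $X^n\star\Gamma$ (all level-$n$ sections in $\Gamma$, trivial above level $n$): two lemmas, both resting on the rigid CSP and the regular branch structure, show that $(X^n\star\Gamma)\rtimes Q_n$ and $\Gamma$ share a common level stabilizer, so every group between $\Gamma$ and $(X^n\star\Gamma)\rtimes Q_n$ contains $\Gamma$ with index a finite power of $p$; one then selects an intermediate index-$p$ extension. A second ingredient you do not supply is the existence of such $\gamma\notin\Gamma$: the paper proves that a finitely generated self-similar subgroup of $\Aut_p(T_p)$ cannot be layered, which is precisely what guarantees $(X^n\star\Gamma)\setminus\Gamma\neq\emptyset$; your $\alpha_n$ ``normalizing the copy of $K$'' could a priori all lie in $\Gamma$ already. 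Finally, pairwise distinctness needs more than ``distinct depths'': the paper pins the groups apart by showing $\tilde H_i$ agrees with $H$ modulo $\stabilizer(n_{i-1})$ but differs modulo $\stabilizer(n_i)$, and for $k\geq 2$ an extra pigeonhole step is needed to ensure infinitely many of the constructed extensions meet $\Gamma$ exactly in the chosen index-$p^{k-1}$ subgroup, so that the commensurability index is exactly $p^k$.
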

\noindent
R\"over's theorem \cite{rov02} on abstract commensurators of branch groups is key to the proof of the upper bound $\comGrowth_{p^k}(\Gamma, \Aut(T_p)) \leq \aleph_0$ (see \S \ref{sec:mainproof}).

Commensurability growth is a generalization of subgroup growth to pairs of groups \cite{MR1978431}. While the subgroup growth function of a finitely generated group is always finite, this paper gives the first naturally occurring pairs where the commensurability growth function has infinite values.


\subsection*{Acknowledgements}
We are grateful to Benson Farb, Andrew Putman, Benjamin Steinberg, and Slobodan Tanushevski for their conversations and support. We thank Benson Farb for helpful comments on an earlier draft.

\section{Preliminaries} \label{sec:preliminaries}

The groups we shall consider will all be subgroups of the group $\Aut(T_d)$ of automorphisms of a $d$-regular rooted tree $T_d$. We will always consider our trees $T_d$ to arise from the following construction. 
Let $X$ be a finite alphabet with $|X|=d\geq 2$ and given a fixed total ordering. The vertex set of the tree $T_X$ is the set of finite sequences over $X$; two sequences are connected by an edge when one can be obtained from the other by right-adjunction of a letter in $X$. The root is the empty sequence $\emptyset$, and the children of $v$ are all $v x$ for $x\in X$. The length of a sequence $v$ is denoted by $|v|$. The set $X^n\subset T_X$, of all sequences of length $n$, is called the \emph{$n$th level} of the tree $T_X$. 

\subsection{Automorphisms}

Let $g$ be an automorphism of the rooted tree $T_X$. 
For a vertex $v\in T_X$, consider the rooted subtrees $vT_X=\{vw\mid w\in T_X\}$ and $g(v)T_X=\{g(v)w\mid w\in T_X\}$ with roots $v$ and $g(v)$ respectively. 
Notice that the map $vT_X\to g(v)T_X$, given by $vw \mapsto g(v)w,$ is a morphism of rooted trees. 
Moreover, the subtrees $vT_X$ and $g(v)T_X$ are naturally isomorphic to $T_X$. Identifying $vT_X$ and $g(v)T_X$ with $T_X$ we get an automorphism $g|_v\colon T_X\to T_X$ uniquely defined by the condition 
$$
g(vw)=g(v)g|_v(w)
$$ for all $w\in T_X$.
We call the automorphism $g|_v$ the \emph{section} of $g$ at $v$. Observe the following obvious properties of the sections:
\begin{align*}
g|_{v_1 v_2}&=g|_{v_1}|_{v_2}\\
(g_1\cdot g_2)|_v&=g_1|_{g_2(v)}\cdot g_2|_v.
\end{align*}

It follows that the action of the automorphism $g \in \Aut(T_X)$  can be written as $g = (g_1,\dots, g_{|X|}) \pi_g$, where $\pi_g \in \Sym(X)$ is the permutation defined by the action of $g$ on the first level of the tree, and $g_1,\dots, g_{|X|} \in \Aut(T_X)$ are the sections of $g$ at the vertices of the first level of $T_X$. This gives an isomorphism $\Aut(T_X)\cong \Aut(T_X)\wr \Sym(X)$.

For $H\leq \Aut(T_X)$, we write $X\star H$ to indicate the group of automorphisms $g$ with $\pi_g=1$ and $g_i\in H$ for all $1\leq i \leq |X|$. Similarly, let $X^n\star H$ indicate the group of elements $g\in \Aut(T_X)$ with $\pi_{g|_v}=1$ for all $v$ on level less than $n$ and $g|_u \in H$ for all $u$ on the $n$th level. 

\subsection{Self-similar and branch groups}

A subgroup $G$ of $\Aut(T_X)$ is \emph{self-similar} if for every $g\in G$ and every $v\in T_X$ the section $g|_v \in G$. For example, the full automorphism group $\Aut(T_X)$ is itself self-similar.

Let $G\leq \Aut(T_X)$ be a group of automorphisms of the rooted tree $T_X$. 
For a vertex $v\in T_X$ the \emph{vertex stabilizer} is the subgroup consisting of the automorphisms that fix the sequence $v$:
$$\stabilizer_G(v)=\{g\in G\mid g(v)=v\}.$$ 
The \emph{$n$th level stabilizer} (also called the \emph{$n$th principal congruence subgroup}) is the subgroup $\stabilizer_G(n)$ consisting of the automorphisms that fix all vertices of the $n$th level:  $$\stabilizer_G(n)=\cap_{v\in X^n} \stabilizer_G(v).$$
Stabilizer subgroups $\stabilizer_G(n)$ with $n\geq 0$ are normal in $G$. 

Notice that any $g\in \stabilizer_G(n)$ can be identified in a natural way with the sequence of sections at vertices in $X^n$ \[(g_1, \dots, g_{|X|^n})\]
taken in the lexicographical ordering on $X^n$. We say that $g$ is \emph{of level $n$} if $g\in \stabilizer_G(n)\setminus \stabilizer_G(n+1)$. 

The \emph{rigid stabilizer} $\operatorname{rist}_G(v)$ of a vertex $v\in T_X$ is the subgroup of $G$ of all automorphisms acting non-trivially only on the vertices of the form $vu$ with $u\in T_X$: $$\operatorname{rist}_G(v)=\{g\in G\mid g(w)=w \text{ for all } w\notin vT_X\}$$
The \emph{$n$th level rigid stabilizer} $$\operatorname{rist}_G(n)=\langle \operatorname{rist}_G(v)\mid v\in X^n\rangle$$ is the subgroup generated by the union of the rigid stabilizers of the vertices of the $n$th level.

We say that a subgroup $G \leq \Aut(T_X)$ is {\em level-transitive} if $G$ acts transitively on each level of $T_X$. An automorphism $g$ is {\em level transitive} if $\langle g \rangle$ is level-transitive.  A level-transitive subgroup $G\leq Aut(T_X)$ is \emph{branch} if $\operatorname{rist}_G(n)$ is of finite index in $G$ for all $n \geq 1$. In this article we will restrict ourselves to the particularly important type of branch groups introduced by the following definition. 

\begin{definition}
A level-transitive group $G\leq \Aut(T_X)$ is \emph{regular branch} if there exists a finite-index subgroup $K$ of $G$ such that $K$ contains $X\star K$ of finite index. In this case, $K$ is called a \emph{branching subgroup} for $G$. Call $G$ \emph{layered} if $G$ itself is a branching subgroup for $G$.
\end{definition}

A subgroup $G$ of $\Aut(T_X)$ is said to satisfy the \emph{congruence subgroup property} if any finite index subgroup $H$ of $G$ contains a principal congruence subgroup $\stabilizer_G(n)$ for some $n\geq 1$. 

\begin{definition} \label{defn:RCSP}
A subgroup $G$ of $\Aut(T_X)$ has the \emph{rigid congruence subgroup property} if every level rigid stabilizer of $G$ contains a level stabilizer of $G$.
\end{definition}

\subsection{The Sylow pro-$p$ subgroup}
\label{sec:sylow}
$\Aut(T_d)$ is a profinite group; it is canonically isomorphic to $\underset{n \geq 1}{\varprojlim} \Aut(T_d(n))$ where $T_d(n)$ is the finite subtree of $T_d$ consisting of vertices of level less than or equal to $n$.

In the case that $d = p$ for a prime $p$, fix a cyclic permutation $\sigma \in \Sym(X)$ of order $p$. The {\em Sylow pro-$p$ subgroup} $\Aut_p(T_p) \leq \Aut(T_p)$ consists of automorphisms $g \in \Aut(T_p)$ such that at every vertex $v\in X^*$ the section $g|_v$ acts on $X$ as $\sigma^i$ for some $0\leq i \leq p-1$ (see \cite{MR1765119} pages 133-134). For a self-similar group $G \leq \Aut_p(T_p)$ we have the containment $G\leq G\wr \langle \sigma \rangle$ under the isomorphism $\Aut_p(T_p) \cong \Aut_p(T_p)\wr \left\langle \sigma \right\rangle$. If $G$ is layered, there is an inclusion $X \star G \leq G$. Since a layered group is level-transitive, it follows that a self-similar and layered subgroup $G\leq \Aut_p(T_p)$ satisfies $G\cong G\wr \left\langle \sigma \right\rangle$.

\subsection{Examples} \label{subsec:examples}

The following examples are self-similar regular branch groups with the rigid congruence subgroup property.

\bold{The First Grigorchuk group:}
Let $X=\{1,2\}$. 
Define automorphisms of $T_X$ inductively by
$$
a = \sigma,\; b = (a,c),\; c = (a,d), \text{ and } d = (1,b),
$$
where $\sigma$ is the transposition $(1,2)\in Sym(X)$.
The \emph{first Grigorchuk group} is $\Gamma:=\langle a, b, c, d \rangle$. Clearly, $\Gamma$ is self-similar. Moreover, it is regular branch \cite{MR764305} over the subgroup 
\[  K=\langle (ab)^2, (bada)^2, (abad)^2\rangle.\]
It also has the congruence subgroup property \cite[Proposition 10]{MR1765119}.

\bold{The Twisted Twin:} Let $X=\{1,2\}$. Define automorphisms of $T_X$ inductively by
$$
a=\sigma, \; \beta=(\gamma, a), \; \gamma=(a, \delta), \text{ and } \delta=(1, \beta).
$$
The {\em Twisted Twin of the Grigorchuk group} is $G := \langle a, \beta, \gamma, \delta \rangle$. It is a self-similar regular branch group \cite{bs10} with branching subgroup 
\[K=\langle \langle [a,\beta], [\beta, \gamma], [\beta, \delta], [\gamma, \delta], \beta\delta\gamma \rangle \rangle ^G.\]
It does not have the congruence subgroup property but does have the rigid congruence subgroup property \cite{BSZ12}.

\bold{Gupta-Sidki groups:} Let $X=\{1,\dots, p\}$ where $p$ is odd prime.
Define automorphisms $x$ and $y$ of $T_X$ inductively by:
$$x= \sigma \text{ and } y =(x,x^{-1}, 1,\dots, 1,y ),$$
where $\sigma$ is the cyclic permutation $(1,2,\dots p)$ on $X$. 
The {\em Gupta-Sidki $p$-group} is  $G_p := \langle x, y \rangle$. Clearly, $G_p$ is self-similar. It is regular branch over its commutator subgroup \cite{MR759409, MR767112}. Moreover, $G_p$ has the congruence subgroup property (see \cite[Proposition 2.6]{MR3119213}).

\bold{Gupta-Sidki variations:} There are various modifications of the Gupta-Sidki group which are self-similar, regular branch groups having the congruence subgroup property. Here is an example of such a modification. Let $G$ be the subgroup of automorphisms on the rooted $p$-regular tree for $p\geq 7$ generated by $x=(1,2,\dots, p)$ and $y=(x^{i_1}, x^{i_2}, \dots, x^{i_p-3},1,1,1,y)$ for $0\leq i_j\leq p-1$ and $i_1\neq 0$. The group $G$ is regular branch over its commutator subgroup (see \cite[Example $10.2$]{MR1765119}).

\bold{Fabrykowski-Gupta group:} Let $X = \{ 1, 2, 3\}$. Define automorphisms of $T_X$ inductively by
\[
    a=(1,2,3) \text{ and }  b=(a,1,b).
\]
The {\em Fabrykowski-Gupta group} is $\mathcal{G} := \langle a,b \rangle$. Then $\mathcal{G}$ is a regular branch group with the congruence subgroup property (see \cite[6.2, 6.4]{MR1899368}). 

A natural generalization of the Fabrykowski-Gupta group is a group $\mathcal{G}_p$ generated by automorphisms $a=(1,2,\dots, p)$ and $b=(a,1,\dots, 1, b)$ of a $p$-regular tree. For every prime $p\geq 5$, the Fabrykowski-Gupta group $\mathcal{G}_p$ is regular branch with the congruence subgroup property \cite[Example 10.1]{MR1765119}.

\bold{EGS groups:} Let $X = \{1,  \dotsc, p\}$ and let $\bar \iota = (i_1, i_2, \dots, i_{p-1})$ be a non-symmetric vector of integers between $0$ and $p-1$, so that $i_j \neq i_{p-j}$ for some $j$. Define automorphisms of $T_X$ inductively by
\[
a=\sigma, \; b=(a^{i_1}, a^{i_2}, \dots, a^{i_{p-1}}, b), \text{ and } c=(c, a^{i_1}, a^{i_2}, \dots, a^{i_{p-1}})
\]
where $\sigma$ is the permutation $(1,2, \dots, p)$.
The {\em extended Gupta-Sidki (EGS) group} is $\Gamma_{\bar \iota} := \langle a,b,c \rangle$.
Pervova constructed the EGS groups as the first examples of branch groups failing to have the congruence subgroup property \cite{per07}. It was shown in \cite{BSZ12} that these groups nevertheless do satisfy the rigid congruence subgroup property.
These groups are clearly self-similar and moreover are regular branch groups having their commutator subgroup as a branching subgroup \cite{per07}.

\section{The adding machine: Proof of Proposition \ref{prop:addingmachine}}

\label{sec:proofaddingmachine}

The {\em binary adding machine} $\addingMachine \leq \Aut(T_2)$ is the infinite cyclic subgroup generated by $\tau := (1, \tau)\sigma$.

\begin{proposition} \label{prop:addingmachine1}
For every natural number $k$, $\comGrowth_{2^k}(\addingMachine, \Aut(T_2)) = \aleph_1$.
\end{proposition}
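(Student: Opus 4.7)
The plan is to construct, for each $k \geq 1$, a continuum-sized family of infinite dihedral subgroups of $\Aut(T_2)$, all having commensurability index $2^k$ with $\addingMachine$. Since $|\Aut(T_2)| = 2^{\aleph_0}$, this also matches the trivial upper bound on $\comGrowth_{2^k}(\addingMachine, \Aut(T_2))$, giving the claimed value $\aleph_1$ (under the identification of this cardinal with the continuum).

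The crucial input is the structure of the normalizer $N := N_{\Aut(T_2)}(\addingMachine)$, which can be extracted from \cite{MR2171236}. The closure $\overline{\addingMachine}$ in $\Aut(T_2)$ is canonically isomorphic to $\Z_2$, and $N \cong \overline{\addingMachine} \rtimes \langle \iota \rangle$ is pro-dihedral, where $\iota$ is a fixed involution satisfying $\iota \tau \iota^{-1} = \tau^{-1}$. In particular, for each $a \in \Z_2$ the element $\iota_a := \iota \tau^a$ is an involution inverting $\tau$, and the map $a \mapsto \iota_a$ is a bijection onto the set of involutions in $N \setminus \overline{\addingMachine}$.

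For each $a \in \Z_2$, set $\Delta_a := \langle \tau^{2^{k-1}}, \iota_a \rangle$. Since $\iota_a$ inverts $\tau^{2^{k-1}}$ and $\iota_a \notin \addingMachine$ (the latter being torsion-free), the group $\Delta_a$ is infinite dihedral with $\Delta_a \cap \addingMachine = \langle \tau^{2^{k-1}}\rangle$. It follows that
\[
\comIndex(\addingMachine, \Delta_a) = [\addingMachine : \langle \tau^{2^{k-1}}\rangle] \cdot [\Delta_a : \langle \tau^{2^{k-1}}\rangle] = 2^{k-1} \cdot 2 = 2^k.
\]

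What remains is to count distinct $\Delta_a$. Using $\tau^b \iota = \iota \tau^{-b}$, the involutions in $\Delta_a$ are precisely $\{ \iota_{a - n \cdot 2^{k-1}} : n \in \Z \}$, and an infinite dihedral group is determined by its set of involutions. Thus $\Delta_a = \Delta_b$ if and only if $a - b \in 2^{k-1}\Z$, so the map $a \mapsto \Delta_a$ has countable fibers and produces $2^{\aleph_0}$ distinct subgroups. The main obstacle is pinning down the structure of $N$; once that is in place, the rest of the argument is a direct calculation inside an infinite dihedral group.
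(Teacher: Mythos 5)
Your proof is correct and follows essentially the same route as the paper: both rest on the Bartholdi--Sidki description of the normalizer of $\addingMachine$ (the involution $\iota=u_{-1}$ inverting $\tau$ and the $\Z_2$-parametrized family of involutions $\tau^x u_{-1}$), and both count the resulting dihedral groups via $\Z_2$ modulo a countable subgroup. In fact your argument is slightly more complete: the paper only constructs the family for $k=1$ (the index-$2$ overgroups $\langle\tau^x u_{-1},\addingMachine\rangle$) and asserts without elaboration that this case suffices, whereas your groups $\Delta_a=\langle\tau^{2^{k-1}},\iota_a\rangle$ handle every $k\geq 1$ uniformly, with a clean distinctness argument via reflection sets. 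One small caution on the upper bound: a group of cardinality $2^{\aleph_0}$ may a priori have $2^{2^{\aleph_0}}$ subgroups, so you should note (as the paper does) that any $\Delta$ with $\comIndex(\addingMachine,\Delta)<\infty$ is finitely generated, which caps the count at $2^{\aleph_0}$.
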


\begin{proof}
Note that it suffices to show that $\comGrowth_2(\addingMachine, \Aut(T_2)) \geq \aleph_1$, since the cardinal of the collection of all finitely generated subgroups of $\Aut(T_2)$ is $\aleph_1$.
In Theorem 4.13 from \cite{MR2171236}, it is shown that $\langle \tau \rangle$ is normalized by elements of the form
$\tau^x u_y$ where $y$ is an odd integer, $x$ is a 2-adic integer, and $u_y := (u_y, u_y \tau^{(y-1)/2}).$
Notice that for any fixed $x\in \Z_2$, the element $\tau^x u_{-1}$ has order two and normalizes $\addingMachine$, and hence $\langle \tau^x u_{-1}, \addingMachine\rangle$ contains $\addingMachine$ as a subgroup of index two.
Moreover, since $\tau^x u_{-1}$ has order two and $u_{-1} \tau u_{-1} = \tau^{-1}$, we have set equalities
$$\langle \tau^x u_{-1}, \addingMachine \rangle
= \langle \tau^x u_{-1}, \tau \rangle
=
\{ \tau^{x+n} u_{-1} : n \in \Z \} \sqcup \addingMachine.$$
Since canonically, $\{ \tau^x : x \in \Z_2\} \cong \Z_2$, it follows that the cardinality of all such sets $\langle \tau^x u_{-1}, \addingMachine \rangle$ as $x$ varies over $\Z_2^*$ is equal to the cardinality of $\Z_2^*/\Z$, which is $\aleph_1$, as desired.
\end{proof}

Now, any two level-transitive automorphisms in $\Aut(T_d)$ are conjugate in $\Aut(T_d)$ (see \cite{GNS01} Corollary 4.1). Since $\addingMachine$ is clearly level-transitive, we get the following immediate corollary.

\begin{corollary}
Let $g\in \Aut(T_2)$ be a level-transitive automorphism. Then for every natural number $k$, $\comGrowth_{2^k}(\langle g \rangle, \Aut(T_2))=\aleph_1$.
\end{corollary}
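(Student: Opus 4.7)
The plan is to reduce the corollary to Proposition \ref{prop:addingmachine1} by exploiting conjugation invariance of the commensurability growth function. The cited result \cite{GNS01} (Corollary 4.1) tells us that any two level-transitive automorphisms of $T_d$ are conjugate in $\Aut(T_d)$. Since $\tau = (1,\tau)\sigma$ is level-transitive, for any level-transitive $g \in \Aut(T_2)$ there exists $h \in \Aut(T_2)$ such that $h \tau h^{-1} = g$, and hence $h \addingMachine h^{-1} = \langle g \rangle$.

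Next, I would observe that conjugation by a fixed element preserves intersections and indices of subgroups. Concretely, for any subgroup $\Delta \leq \Aut(T_2)$, the conjugate $h \Delta h^{-1}$ satisfies
\[
h \addingMachine h^{-1} \cap h \Delta h^{-1} = h(\addingMachine \cap \Delta) h^{-1},
\]
so both indices appearing in $\comIndex(\addingMachine, \Delta)$ are unchanged upon conjugating the pair by $h$. Thus
\[
\comIndex(h \addingMachine h^{-1}, h \Delta h^{-1}) = \comIndex(\addingMachine, \Delta).
\]

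Therefore conjugation by $h$ gives a bijection between the set
\[
\{\Delta \leq \Aut(T_2) : \comIndex(\addingMachine, \Delta) = 2^k\}
\]
and the set
\[
\{\Delta' \leq \Aut(T_2) : \comIndex(\langle g \rangle, \Delta') = 2^k\},
\]
with inverse given by conjugation by $h^{-1}$. Consequently $\comGrowth_{2^k}(\langle g \rangle, \Aut(T_2)) = \comGrowth_{2^k}(\addingMachine, \Aut(T_2))$, and the right-hand side equals $\aleph_1$ by Proposition \ref{prop:addingmachine1}. There is no real obstacle here; the entire argument is a conjugation-invariance observation combined with the conjugacy of level-transitive automorphisms, so the only thing that needs to be spelled out carefully is the equality of commensurability indices under conjugation.
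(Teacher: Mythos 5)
Your proposal is correct and matches the paper's argument: the paper also deduces the corollary from the conjugacy of level-transitive automorphisms (citing \cite{GNS01}, Corollary 4.1) together with the evident conjugation-invariance of the commensurability index, applied to the level-transitive adding machine generator $\tau$. You have simply spelled out the invariance step, which the paper leaves implicit.
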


We now prove the second half of Proposition \ref{prop:addingmachine}. Fix the element $\delta = (\delta, \delta) \sigma$ and set $H = \langle \delta, \tau \rangle$.
We use the following theorem from \cite{MR2171236}.

\begin{theorem}[Theorem 4.12 \cite{MR2171236}]
The group $H$ is infinite dihedral.
Moreover, $H$ is its own normalizer in $\Aut(T_2)$.
\end{theorem}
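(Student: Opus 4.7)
The plan is to prove the two assertions separately, starting with the dihedral structure. Direct computation in $\Aut(T_2) \cong \Aut(T_2) \wr \langle \sigma \rangle$ gives $\delta^2 = (\delta^2, \delta^2)$, so the unique solution to the self-referential equation $g = (g, g)$ in $\Aut(T_2)$ forces $\delta^2 = 1$. Similarly, expanding yields $\delta\tau\delta = (\delta\tau\delta, 1)\sigma$ while $\tau^{-1} = (\tau^{-1}, 1)\sigma$; since the recursion $g = (g, 1)\sigma$ has a unique solution (the quotient of two solutions satisfies $h = (1, h)$ and thus fixes every finite-level vertex), one concludes $\delta\tau\delta = \tau^{-1}$. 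Hence $H$ is a quotient of $D_\infty = \langle a, b \mid a^2 = 1,\, aba = b^{-1} \rangle$. Since $\tau$ is the binary odometer acting on level $n$ as a $2^n$-cycle, $\tau$ has infinite order; and since $\delta$ is a nontrivial involution it cannot equal any $\tau^n$, so the surjection $D_\infty \twoheadrightarrow H$ is an isomorphism.

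For self-normalization, let $\alpha \in \Aut(T_2)$ normalize $H$. The subgroup $\langle \tau \rangle$ is the unique maximal torsion-free subgroup of $H \cong D_\infty$, hence is characteristic in $H$, and thus $\alpha \tau \alpha^{-1} = \tau^{\pm 1}$. I would next invoke the identification $C_{\Aut(T_2)}(\tau) = \overline{\langle \tau \rangle} \cong \Z_2$: in each finite quotient $\Aut(T_2(n))$, the image of $\tau$ is a $2^n$-cycle on level $n$, whose centralizer in $\Sym(X^n)$ is already cyclic of order $2^n$, forcing the centralizer in $\Aut(T_2(n))$ to equal $\langle \tau|_{T_2(n)} \rangle$; the inverse limit over $n$ yields $\overline{\langle \tau \rangle}$. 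Hence either $\alpha = \tau^x$ for some $x \in \Z_2$, or $\delta\alpha$ centralizes $\tau$ (since $\delta\tau^{-1}\delta = \tau$) and so $\alpha = \delta\tau^x$ for some $x \in \Z_2$.

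Finally, the condition $\alpha\delta\alpha^{-1} \in H$ must be imposed. Using the continuous extension of $\delta\tau^y\delta = \tau^{-y}$ to $y \in \Z_2$, in the centralizing case one computes $\alpha\delta\alpha^{-1} = \tau^x\delta\tau^{-x} = \tau^{2x}\delta$; membership in $H$ requires $2x \in \Z$, and combined with $x \in \Z_2$ this forces $x \in \tfrac{1}{2}\Z \cap \Z_2 = \Z$, so $\alpha \in \langle \tau \rangle \subset H$. The inverting case is symmetric and yields $\alpha = \delta\tau^x \in H$. The main obstacle in this proof is the centralizer identification $C_{\Aut(T_2)}(\tau) = \overline{\langle \tau \rangle}$; once this (together with the continuous extension of the dihedral relation to $\Z_2$) is in hand, the rest of the normalizer argument reduces to straightforward bookkeeping with $2$-adic integers.
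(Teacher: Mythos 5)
The paper does not actually prove this statement; it is imported verbatim from Bartholdi--Sidki [BS06, Theorem 4.12], so there is no internal proof to compare against. Your argument is correct and self-contained. For the dihedral structure: the recursions $\delta^2=(\delta^2,\delta^2)$ and $\delta\tau\delta=(\delta\tau\delta,1)\sigma=\tau^{-1}$, together with the level-by-level uniqueness argument for solutions of $g=(g,g)$ and $g=(g,1)\sigma$, do give the relations, and the infinite order of $\tau$ alone kills every proper quotient of $D_\infty$ (any nontrivial normal subgroup of $D_\infty$ meets the translation subgroup), so the surjection is an isomorphism. For self-normalization, your key step $C_{\Aut(T_2)}(\tau)=\overline{\langle\tau\rangle}\cong\Z_2$ is sound: $\Aut(T_2(n))$ acts faithfully on level $n$, the centralizer of a $2^n$-cycle in $\Sym(X^n)$ is the cyclic group it generates, and passing to the inverse limit gives the closure. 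This is in effect the $y=1$ slice of the normalizer description $\{\tau^x u_y\}$ from [BS06, Theorem 4.13] that the paper does quote in proving Proposition 2.1, so your route runs parallel to the source while avoiding the explicit elements $u_y$. The final $2$-adic bookkeeping is also fine; the only point worth making explicit is that $\tau^{2x}\delta$ cannot lie in the coset $\langle\tau\rangle$ of $H$ because $\delta$ is a nontrivial involution and $\overline{\langle\tau\rangle}\cong\Z_2$ is torsion-free, so membership of $\tau^{2x}\delta$ in $H$ genuinely forces $2x\in\Z$ and hence $x\in\Z$.
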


\begin{proposition} \label{prop:addingmachine2}
The group $H$ satsifies
$\comGrowth_2(H, \Aut(T_2)) = 3$.
\end{proposition}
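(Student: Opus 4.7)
The plan is to analyze the equation $c(H,\Delta)=[H:H\cap\Delta][\Delta:H\cap\Delta]=2$ by noting that one factor must equal $1$ and the other must equal $2$. This splits the count of valid $\Delta$ into two cases: (a) $H\leq\Delta$ with $[\Delta:H]=2$, and (b) $\Delta\leq H$ with $[H:\Delta]=2$. I will show case (a) contributes $0$ subgroups and case (b) contributes exactly $3$.

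For case (a), the key observation is that any subgroup of index $2$ is normal, so $H\trianglelefteq\Delta$ would force $\Delta\leq N_{\Aut(T_2)}(H)$. By the theorem from \cite{MR2171236} quoted above, $H$ is its own normalizer, so $\Delta\leq H$; combined with $H\leq\Delta$ this yields $\Delta=H$, contradicting $[\Delta:H]=2$. Hence case (a) gives no contribution.

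For case (b), I reduce to a finite computation in the abelianization. Since $H$ is infinite dihedral with presentation $\langle \tau,\delta\mid \delta^2=1,\ \delta\tau\delta=\tau^{-1}\rangle$, the relation $\delta\tau\delta\tau=1$ becomes $\tau^2=1$ in the abelianization, giving $H^{\mathrm{ab}}\cong\Z/2\times\Z/2$. Every index-$2$ subgroup is normal and hence is the kernel of a surjection $H\twoheadrightarrow\Z/2$; such surjections factor through $H^{\mathrm{ab}}$ and biject with nonzero elements of $\Hom(\Z/2\times\Z/2,\Z/2)$, of which there are exactly $3$. Concretely, the three subgroups are $\langle\tau\rangle$, $\langle\tau^2,\delta\rangle$, and $\langle\tau^2,\delta\tau\rangle$, each visibly of index $2$ in $H$.

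Combining the two cases gives $\comGrowth_2(H,\Aut(T_2))=0+3=3$. There is no serious obstacle: the only subtle ingredient is the self-normalizing property of $H$, which is supplied directly by the cited theorem of \cite{MR2171236}, and the rest is elementary dihedral-group bookkeeping.
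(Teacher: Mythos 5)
Your proposal is correct and follows essentially the same route as the paper: rule out index-two overgroups via the self-normalizing property of $H$ from \cite{MR2171236}, then count the three index-two subgroups of the infinite dihedral group. Your version merely spells out the case split on the two factors of the commensurability index and the abelianization computation, which the paper leaves implicit.
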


\begin{proof}
Notice that if $H_0 \leq \Aut(T_2)$ contains $H$ with $[H_0 : H] = 2$, then $H \lhd H_0$ and so $H_0$ is contained in the normalizer of $H$.
Hence, there does not exist a supergroup $H_0$ containing $H$ as a subgroup of index two.
Moreover, since $H$ is infinite dihedral, there are only three subgroups of $H$ of index two, and so $\comGrowth_2(H, \Aut(T_2) = 3$ as desired.
\end{proof}

\section{Branch groups: Proof of Theorem \ref{thm:main}}
\label{sec:mainproof}

\begin{lemma}
\label{lem:notlayered}
Let $\Gamma \leq Aut_p(T_p)$ be self-similar and finitely generated. Then $\Gamma$ is not layered.
\end{lemma}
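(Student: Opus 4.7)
The plan is a short argument by contradiction, isolating the rigidity imposed by finite generation on the abelianization. Suppose for contradiction that $\Gamma$ is layered. Then $\Gamma$ is level-transitive and contains $X \star \Gamma$, so by the observation recorded in \S \ref{sec:sylow}, self-similarity combined with $\Gamma \leq \Aut_p(T_p)$ upgrades to an honest isomorphism $\Gamma \cong \Gamma \wr \langle \sigma \rangle = \Gamma^p \rtimes C_p$, where $C_p = \langle \sigma \rangle$ permutes the $p$ factors cyclically.

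Next, I would pass to abelianizations. For a semidirect product $A \rtimes C$ with $A$ abelian, the abelianization is $A_C \oplus C^{\mathrm{ab}}$, where $A_C$ denotes the coinvariants of the $C$-action. Applying this with $A = (\Gamma^{\mathrm{ab}})^p$ and $C = C_p$ acting by cyclic permutation, the sum-of-coordinates map $(a_1,\dots,a_p) \mapsto a_1 + \dots + a_p$ factors through an isomorphism $((\Gamma^{\mathrm{ab}})^p)_{C_p} \cong \Gamma^{\mathrm{ab}}$, since the relations of the form $\sigma \cdot (a_1,\dots,a_p) - (a_1,\dots,a_p)$ force all $p$ coordinates to be identified. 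This yields $(\Gamma \wr C_p)^{\mathrm{ab}} \cong \Gamma^{\mathrm{ab}} \oplus C_p$, and combined with $\Gamma \cong \Gamma \wr C_p$ we conclude $\Gamma^{\mathrm{ab}} \cong \Gamma^{\mathrm{ab}} \oplus C_p$.

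Finally, I would invoke finite generation. Since $\Gamma$ is finitely generated, $\Gamma^{\mathrm{ab}}$ is a finitely generated abelian group, so by the structure theorem $\Gamma^{\mathrm{ab}} \cong \Z^r \oplus T$ for some integer $r \geq 0$ and some finite abelian group $T$. Comparing torsion subgroups on both sides of $\Gamma^{\mathrm{ab}} \cong \Gamma^{\mathrm{ab}} \oplus C_p$ yields $T \cong T \oplus C_p$, which is impossible because $|T \oplus C_p| = p|T| > |T|$.

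The only step demanding any real care is the computation of the abelianization of $\Gamma \wr C_p$, but this is a standard coinvariant calculation for wreath products with cyclic top group. I want to emphasize that finite generation of $\Gamma$ is essential and cannot be dropped: the group $A = \bigoplus_{n \geq 1} C_p$ satisfies $A \cong A \oplus C_p$, so without finite generation the abelianization equation has infinitely generated solutions. Thus the obstruction really comes from the structure theorem for \emph{finitely generated} abelian groups, precisely as the hypothesis provides.
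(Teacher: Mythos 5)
Your proof is correct. It pivots on the same structural fact as the paper's argument --- the observation from \S\ref{sec:sylow} that a self-similar layered subgroup of $\Aut_p(T_p)$ satisfies $\Gamma \cong \Gamma \wr \langle\sigma\rangle$ --- but extracts the contradiction by a genuinely different route. The paper defines, for each level $i$, an explicit homomorphism $\psi_i\colon \Gamma \to C_p$ by $g \mapsto \prod_{|v|=i}\pi_{g|_v}$, uses layeredness to show that $\Psi_n = \prod_{i=0}^n \psi_i$ surjects onto $\bigoplus_{i=0}^n C_p$, and concludes that $\Gamma$ requires arbitrarily many generators. You instead compute $(\Gamma\wr C_p)^{\mathrm{ab}} \cong \Gamma^{\mathrm{ab}}\oplus C_p$ via the coinvariant description of the abelianization of a semidirect product, deduce $\Gamma^{\mathrm{ab}} \cong \Gamma^{\mathrm{ab}}\oplus C_p$, and rule this out by comparing torsion subgroups in the structure theorem. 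The two arguments are really two packagings of one phenomenon: iterating your isomorphism gives surjections $\Gamma^{\mathrm{ab}} \to (C_p)^{n}$ for every $n$, which is exactly what the paper's $\Psi_n$ exhibits by hand. What your version buys is conceptual economy --- no need to verify that each $\psi_i$ is a well-defined homomorphism or that $\Psi_n$ is onto --- at the cost of invoking the coinvariant formula and the splitting $\Gamma \cong \Gamma^p \rtimes C_p$; the latter does hold here, since any $g\in\Gamma$ with $\pi_g=\sigma$ satisfies $g\sigma^{-1}\in X\star\Gamma\leq\Gamma$, hence $\sigma\in\Gamma$. Your closing remark that finite generation is indispensable (witness $\bigoplus_{n\geq 1}C_p$) correctly identifies where the hypothesis enters in both proofs.
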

\begin{proof}
Suppose that $\Gamma$ is layered. We will show that it can not be finitely generated. Indeed, let $C_p$ be a cyclic group of order $p$ and for each $i$ define a homomorphism $\psi_i:\Gamma \rightarrow C_p$ by $\prod_{|v|=i} \pi_{g|_v}$. Now for each $n$ let $\Psi_n = \prod_{i=0}^n \psi_i: \Gamma \rightarrow \bigoplus_{i=0}^n C_p$. Since $\Gamma$ is layered, the remarks of \S\ref{sec:sylow} give an isomorphism $\Gamma  \cong \Gamma\wr\langle \sigma \rangle$ under the isomorphism $\Aut_p(T_p)\cong \Aut_p(T_p)\wr \left\langle \sigma \right\rangle$. This implies that $\Psi_n$ is surjective for each $n$. Since the groups $\bigoplus_{i=0}^n C_p$ require arbitrarily many generators as $n$ tends to infinity, the group $\Gamma$ is not finitely generated.
\end{proof}

We now discuss some consequences of the rigid congruence subgroup property. For a regular branch group $\Gamma$ with maximal branching subgroup $K$, Corollary 1.6 in \cite{BSZ12} says $K$ contains a level rigid stabilizer. Consequently, if $\Gamma$ has the rigid congruence subgroup property, $K$ also contains a level stabilizer. In particular, there exists an $m$ with $\stabilizer_{\Gamma}(m)\leq K$.

\begin{lemma}\label{lembranchingstab}
Let $\Gamma$ be a self-similar regular branch group with maximal branching subgroup $K$ and with the rigid congruence subgroup property. Then for all $n\geq 0$, $\stabilizer_\Gamma(m+n)=X^n\star \stabilizer_\Gamma(m)$ where $m$ is such that $\stabilizer_\Gamma(m)\leq K$.
\end{lemma}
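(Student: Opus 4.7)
The plan is to prove equality by establishing both containments directly from the definitions, using self-similarity of $\Gamma$ and an iterative application of the branching property $X \star K \leq K$. Nowhere will the rigid congruence subgroup property enter the proof itself; that hypothesis is only what guarantees the existence of the integer $m$ with $\stabilizer_\Gamma(m) \leq K$ in the first place (as recorded in the paragraph preceding the lemma).

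For the containment $\stabilizer_\Gamma(m+n) \subseteq X^n \star \stabilizer_\Gamma(m)$, I would take $g \in \stabilizer_\Gamma(m+n)$ and first observe the standard fact that a rooted-tree automorphism that fixes level $m+n$ also fixes every vertex of every level $\leq m+n$ (a routine induction on vertex length: if $g$ fixes every child of $v$ then $g$ fixes $v$). In particular $g$ fixes level $n$ pointwise, so $\pi_{g|_v} = 1$ for every $v$ with $\abs{v} < n$. For each $u \in X^n$, the section $g|_u$ lies in $\Gamma$ by self-similarity, and it fixes level $m$ because $g$ fixes level $m+n$; hence $g|_u \in \stabilizer_\Gamma(m)$. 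Combining these two statements gives $g \in X^n \star \stabilizer_\Gamma(m)$.

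For the reverse containment, take $g \in X^n \star \stabilizer_\Gamma(m)$. From $\pi_{g|_v} = 1$ for $\abs{v} < n$ the element $g$ fixes level $n$, and since each section $g|_u$ at $u \in X^n$ fixes level $m$, the element $g$ fixes level $m+n$. What remains is to show $g \in \Gamma$. The key auxiliary claim is $X^n \star K \leq K$ for every $n$, which follows by induction: the case $n=0$ is trivial, and in the inductive step every element of $X^n \star K$ can be written as $(h_1,\dots,h_p)$ with trivial top permutation and each $h_i \in X^{n-1} \star K \leq K$ by hypothesis, so $(h_1,\dots,h_p) \in X \star K \leq K$ by the branching property. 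Since $\stabilizer_\Gamma(m) \leq K$, we conclude $g \in X^n \star K \leq K \leq \Gamma$, and combined with the level-fixing property $g \in \stabilizer_\Gamma(m+n)$.

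The only points requiring care are (i) keeping the two uses of ``fixing a level'' distinct — one for the propagation of fixed vertices down to the root, and one for assembling sections back into a global element — and (ii) the iterative use of $X \star K \leq K$ to deduce $X^n \star K \leq K$. Neither step is a genuine obstacle; the proof is essentially a bookkeeping exercise in the wreath-product decomposition together with the defining feature of a branching subgroup.
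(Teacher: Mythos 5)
Your proof is correct and follows essentially the same route as the paper: self-similarity gives the containment $\stabilizer_\Gamma(m+n)\leq X^n\star\stabilizer_\Gamma(m)$, and the chain $X^n\star\stabilizer_\Gamma(m)\leq X^n\star K\leq K\leq\Gamma$ gives the reverse. You simply make explicit the details the paper leaves implicit (the induction establishing $X^n\star K\leq K$ and the level-fixing bookkeeping), and your observation that the rigid congruence subgroup property is used only to supply $m$ matches the paper's setup.
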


\begin{proof}
Let $\Gamma$ be a self-similar regular branch group with the rigid congruence subgroup property and let $K$ be the maximal branching subgroup for $\Gamma$. Since $\Gamma$ is self-similar, $\stabilizer_\Gamma(m+n)\leq X^n\star\stabilizer_\Gamma(m)$ for all $m$ and $n$.

Now let $m$ be such that $\stabilizer_\Gamma(m)\leq K$. As $K$ is a branching subgroup, for all $n\geq 0$, $X^n\star K\leq K$ and so we get the following set of inclusions:
\[X^n\star \stabilizer_{\Gamma}(m)\leq X^n\star K \leq K \leq \Gamma\]
and so $X^n\star \stabilizer_{\Gamma}(m)$ is contained in $\Gamma$ and stabilizes level $(m+n)$. Thus we conclude $\stabilizer_\Gamma(m+n)= X^n\star \stabilizer_\Gamma(m)$ as desired.
\end{proof}

Let $Q_n=\{g\in \Aut_p(T_p) \mid g|_v=1 \text{ for all $v$ with } |v| \geq n \}$. Observe that the group $(X^n\star\Gamma)\cap Q_n=\{1\}$ and so $\langle X^n\star \Gamma, Q_n \rangle = (X^n \star \Gamma)\rtimes Q_n$.

\begin{lemma}\label{lem:samestabs}
Let $\Gamma\in \Aut_p(T_p)$ be a self-similar regular branch group with maximal branching subgroup $K$ and the rigid congruence subgroup property. Let $n\geq 0$. Then for all sufficiently large $k$, $\stabilizer_{\Gamma}(k)=\stabilizer_{(X^n\star \Gamma)\rtimes Q_n}(k)$.
\end{lemma}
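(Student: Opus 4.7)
The plan is to show both stabilizers coincide, as subgroups of $\Aut_p(T_p)$, with the single subgroup $X^n \star \stabilizer_\Gamma(k-n)$, provided $k$ is large enough. Since $\Gamma$ has the rigid congruence subgroup property, there is some $m$ with $\stabilizer_\Gamma(m) \leq K$; I will work throughout with $k \geq m+n$.

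First I would analyze the right-hand side. Using the semidirect product decomposition, write an arbitrary element of $(X^n \star \Gamma) \rtimes Q_n$ uniquely as $h q$ with $h \in X^n \star \Gamma$ and $q \in Q_n$. Since $h$ fixes $X^n$ pointwise, $(hq)(v) = q(v)$ for every $v \in X^n$. Thus if $hq$ fixes $X^k$ with $k \geq n$, then $hq$ also fixes $X^n$, forcing $q$ to fix $X^n$ pointwise. But an element of $Q_n$ that fixes $X^n$ is trivial, because its sections at level $n$ are all trivial by the definition of $Q_n$. Hence the stabilizer collapses into $X^n \star \Gamma$, and identifying the latter with $\Gamma^{p^n}$ we get
\[
\stabilizer_{(X^n\star \Gamma)\rtimes Q_n}(k) \;=\; X^n \star \stabilizer_\Gamma(k-n)
\]
whenever $k \geq n$.

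Next I would compute the left-hand side using Lemma \ref{lembranchingstab}. Writing $k = m + j$ with $j \geq n$ (so $k \geq m+n$), the lemma twice gives
\[
\stabilizer_\Gamma(k) \;=\; X^j \star \stabilizer_\Gamma(m) \quad \text{and} \quad \stabilizer_\Gamma(k-n) \;=\; X^{j-n} \star \stabilizer_\Gamma(m).
\]
Associativity of the $\star$-construction then yields
\[
X^n \star \stabilizer_\Gamma(k-n) \;=\; X^n \star X^{j-n} \star \stabilizer_\Gamma(m) \;=\; X^j \star \stabilizer_\Gamma(m) \;=\; \stabilizer_\Gamma(k).
\]
Combining the two displays finishes the proof for all $k \geq m+n$.

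The potential obstacle is the first step, which uses two subtle points: that $(X^n\star\Gamma) \cap Q_n = \{1\}$ (so that the unique decomposition $g = hq$ makes sense), and that an element of $Q_n$ fixing level $n$ is actually trivial in $\Aut(T_p)$ — both are direct consequences of the defining condition on sections. Once those observations are made, the rest is bookkeeping with the identification $X^n \star \Gamma \cong \Gamma^{p^n}$ and a clean application of Lemma \ref{lembranchingstab}, whose strength (that the level stabilizer $\stabilizer_\Gamma(m)$ gets replicated perfectly at every lower level of the tree) is precisely what makes the two sides match.
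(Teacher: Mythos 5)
Your proof is correct and follows essentially the same route as the paper's: both arguments reduce to the observations that only the identity of $Q_n$ stabilizes a level $\geq n$, that $\stabilizer_{X^n\star\Gamma}(k)=X^n\star\stabilizer_\Gamma(k-n)$, and that Lemma~\ref{lembranchingstab} identifies this with $\stabilizer_\Gamma(k)$. The only cosmetic difference is that you verify the equality directly for every $k\geq m+n$, whereas the paper checks it at $k=m+n$ and notes that equality propagates to higher levels.
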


\begin{proof}
Note that it suffices to show equality for a fixed $k$, as the stabilizer of the $k+1$ level is precisely the set of elements in the stabilizer of level $k$ which also stabilize level $k+1$.

Let $k=n+m$ where $m$ is such that $\stabilizer_{\Gamma}(m)\leq K$. As $\Gamma$ is self-similar, $\Gamma\leq (X^n\star \Gamma)\rtimes Q_n$ and so similarly $\stabilizer_{\Gamma}(k)\leq\stabilizer_{(X^n\star\Gamma)\rtimes Q_n}(k)$.

For the other inclusion, note that only the identity element in $Q_n$ stabilizes level $n+m$ and so $\stabilizer_{(X^n\star \Gamma)\rtimes Q_n}(n+m)=\stabilizer_{(X^n\star \Gamma)}(n+m)$. Moreover, we have
$\stabilizer_{(X^n\star\Gamma)}(n+m)=X^n\star \stabilizer_{\Gamma}(m)$, which by Lemma~\ref{lembranchingstab} is precisely equal to  $\stabilizer_{\Gamma}(n+m)$.
\end{proof}

We now establish an upper bound on the commensurabilty growth. Our proof uses the abstract and relative commensurators. The {\em relative commensurator} of a subgroup $H$ in a group $G$ is
\[
\Comm_G(H) := \left\{ g\in G \mid \comIndex(g H g^{-1}, H) < \infty \right\}.
\]
The {\em abstract commensurator} of $G$ is the set of equivalence classes of isomorphisms $\phi: H_1 \to H_2$ for finite-index subgroups $H_1,H_2\leq G$, where two isomorphisms are equivalent if they are both defined and equal on a common finite-index subgroup of $G$.

\begin{proposition}
\label{prop:upperbound}
Let $\Gamma$ be a finitely generated, self-similar, regular branch group over a branching subgroup $K$ in $\Aut(T_p)$. Suppose $\Gamma$ is contained in the Sylow pro-$p$ subgroup of $\Aut(T_p)$ and satisfies the rigid congruence subgroup property.
Then \[\comGrowth_{p^k}(\Gamma, \Aut(T_p)) \leq \aleph_0.\]
\end{proposition}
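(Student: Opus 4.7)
The plan is to show that every subgroup $\Delta \leq \Aut(T_p)$ commensurable with $\Gamma$ is contained in the relative commensurator $\Comm_{\Aut(T_p)}(\Gamma)$, and then to use R\"over's theorem to deduce that this commensurator is countable, from which the upper bound follows by a generation argument.

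First, fix $\Delta$ with $\comIndex(\Gamma, \Delta) = p^k$, and let $H := \Gamma \cap \Delta$, which is finite-index in both $\Gamma$ and $\Delta$ with $[\Gamma : H] \cdot [\Delta : H] = p^k$. For any $\delta \in \Delta$, the subgroup $\delta H \delta^{-1}$ has finite index in $\Delta$, so $\delta H \delta^{-1} \cap H$ has finite index in $\Delta$ and therefore also finite index in $\Gamma$. Since $\delta H \delta^{-1} \cap H \leq \delta \Gamma \delta^{-1} \cap \Gamma$, this forces $\delta \in \Comm_{\Aut(T_p)}(\Gamma)$. Thus $\Delta \leq \Comm_{\Aut(T_p)}(\Gamma)$.

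Second, I invoke the canonical homomorphism $\Comm_{\Aut(T_p)}(\Gamma) \to \Comm(\Gamma)$ sending $g$ to the equivalence class of the conjugation isomorphism $g(\,\cdot\,)g^{-1} : \Gamma \cap g^{-1}\Gamma g \to g\Gamma g^{-1} \cap \Gamma$. R\"over's theorem, together with the self-similar regular branch hypothesis and the rigid congruence subgroup property, implies that this map is injective, so $\Comm_{\Aut(T_p)}(\Gamma)$ embeds in $\Comm(\Gamma)$. Because $\Gamma$ is finitely generated and residually finite, there are only countably many finite-index subgroups of $\Gamma$, each is itself finitely generated, and any homomorphism between two finitely generated countable groups is determined by the images of a finite generating set in a countable target. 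Hence $\Comm(\Gamma)$, and therefore $\Comm_{\Aut(T_p)}(\Gamma)$, is countable.

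Third, I count the possible $\Delta$. Each such $\Delta$ is generated by $H = \Gamma \cap \Delta$ together with coset representatives $\delta_1, \ldots, \delta_t$ for $H$ in $\Delta$, with $t < p^k$. Since $\Gamma$ is finitely generated, it has only finitely many subgroups of index dividing $p^k$, so $H$ ranges over a finite set. Each $\delta_i$ is one of countably many elements of $\Comm_{\Aut(T_p)}(\Gamma)$, and $t$ is bounded by $p^k$. Therefore the total count of such $\Delta$ is at most $\aleph_0$.

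The main obstacle is verifying that R\"over's theorem genuinely yields injectivity of $\Comm_{\Aut(T_p)}(\Gamma) \to \Comm(\Gamma)$ in the generality stated here; concretely, this reduces to showing that the centralizer in $\Aut(T_p)$ of any sufficiently deep congruence or rigid stabilizer in $\Gamma$ is trivial. This is precisely where the branch structure together with the rigid congruence subgroup property must be exploited: the level rigid stabilizers are large enough, by the earlier lemmas of this section identifying them with $X^n \star \stabilizer_\Gamma(m)$, to force any commuting element of $\Aut(T_p)$ to be the identity.
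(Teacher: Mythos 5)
Your proposal is correct and follows essentially the same route as the paper: every subgroup commensurable with $\Gamma$ lands in a relative commensurator, R\"over's theorem identifies that commensurator with the abstract commensurator $\Comm(\Gamma)$, which is countable since $\Gamma$ is finitely generated, and a generation/counting argument finishes. The injectivity you flag as the main obstacle is not really a separate issue: the natural map $\Comm_{\Aut(T_p)}(\Gamma)\to\Comm(\Gamma)$ factors through $\Comm_{\Homeo(\partial T_p)}(\Gamma)$ via the faithful boundary action, and R\"over's result is precisely that this latter map is an isomorphism for branch groups, so your map is injective by restriction.
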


\begin{proof}
Consider the map $\Phi: \Aut(T_p) \to \Homeo(\partial T_p)$ that sends an element to its induced action on the boundary. Note that $\Phi$ is injective. If $\Delta \leq \Aut(T_p)$ satisfies $\comIndex(\Gamma, \Delta) < \infty$ then $\Phi(\Delta) \leq \Comm_{\Homeo(\partial T_p)}(\Gamma)$. Therefore, the map $\Phi$ faithfully maps the collection of subgroups of $\Aut(T_p)$ commensurable with $\Gamma$ into the collection of finitely generated subgroups of $\Comm_{\Homeo(\partial T_p)}(\Gamma)$. R\"over~\cite{rov02} has shown that $\Comm_{\Homeo(\partial T_p)}(\Gamma)$ is isomorphic to $\Comm(\Gamma)$, the abstract commensurator of $\Gamma$. Because $\Gamma$ is finitely generated, $\Comm(\Gamma)$ is countable. Therefore there are countably many finitely generated subgroups of $\Comm_{\Homeo(\partial T_p)}(\Gamma)$, and so there are countably many subgroups $\Delta \leq \Aut(T_p)$ commensurable with $\Gamma$.
\end{proof}

We finish the proof by supplying the $\aleph_0$ lower bound:

\begin{theorem} \label{thm:mainthm}
Let $\Gamma$ be a finitely generated, self-similar, regular branch group over a branching subgroup $K$ in $\Aut(T_p)$. Suppose $\Gamma$ is contained in the Sylow pro-$p$ subgroup of $\Aut(T_p)$ and satisfies the rigid congruence subgroup property.
Then $\comGrowth_{p^k}(\Gamma, \Aut(T_p)) = \aleph_0$ for all $k$.
\end{theorem}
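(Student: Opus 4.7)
The upper bound $\comGrowth_{p^k}(\Gamma, \Aut(T_p)) \leq \aleph_0$ is given by Proposition \ref{prop:upperbound}, so my task is to establish the matching lower bound by exhibiting $\aleph_0$ distinct subgroups $M \leq \Aut(T_p)$ with $\comIndex(\Gamma, M) = p^k$. The plan is to take such $M$ of the form $\Gamma \leq M \leq \Delta_n$ with $[M : \Gamma] = p^k$, where $\Delta_n := (X^n \star \Gamma) \rtimes Q_n$ is the overgroup appearing in Lemma \ref{lem:samestabs}. First I verify $\Gamma \leq \Delta_n$ for every $n$: given $g \in \Gamma$, decompose $g = sq$ with $q \in Q_n$ the induced action of $g$ on the first $n$ levels and $s = g q^{-1} \in \stabilizer_{\Aut(T_p)}(n)$. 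Self-similarity forces every level-$n$ section of $s$ (which equals the corresponding section of $g$) to lie in $\Gamma$, so $s \in X^n \star \Gamma$ and hence $g \in \Delta_n$. Lemma \ref{lem:samestabs} then gives $[\Delta_n : \Gamma] < \infty$; since $\Delta_n \leq \Aut_p(T_p)$ lies in a pro-$p$ group, this index is $p^{a_n}$ for some $a_n \geq 0$.

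Next I show $a_n \to \infty$. By Lemma \ref{lem:notlayered}, $\Gamma$ is not layered, so there exists $r \in X \star \Gamma \setminus \Gamma$, witnessing the strict inclusion $\Gamma \lneq \Delta_1$; as the index is a $p$-power, $[\Delta_1 : \Gamma] \geq p$. Identifying $\Delta_{n+1}$ with $\Delta_n \wr \langle \sigma \rangle$ via the self-similar grouping of layers and applying the inclusion $\Gamma \leq \Delta_1$ at each of the $p^n$ level-$n$ coordinates of $\Delta_n$ yields $[\Delta_{n+1} : \Delta_n] = [\Delta_1 : \Gamma]^{p^n}$; telescoping gives $[\Delta_n : \Gamma] = [\Delta_1 : \Gamma]^{(p^n - 1)/(p-1)}$, so $a_n \to \infty$.

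Now fix $k \geq 1$ and choose $n$ with $a_n \geq k$. Let $C_n$ denote the core of $\Gamma$ in $\Delta_n$; then $\Delta_n / C_n$ is a finite $p$-group in which $\Gamma/C_n$ has index $p^{a_n}$. Because every proper subgroup of a finite $p$-group is contained in a subgroup of index $p$, one iteratively produces a chain $\Gamma/C_n = N_0 < N_1 < \cdots < N_{a_n} = \Delta_n/C_n$ with $[N_{i+1} : N_i] = p$. The preimage $M$ of $N_k$ in $\Delta_n$ satisfies $\Gamma \leq M \leq \Delta_n$ and $[M : \Gamma] = p^k$, so $\comIndex(\Gamma, M) = p^k$.

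The main obstacle is ensuring that, as $n$ varies, infinitely many of the resulting $M$ are distinct. The plan is to choose the chain so that $M$ contains an element of $\Delta_n \setminus \Delta_{n-1}$. Non-layeredness of $\Gamma$ furnishes $r \in X \star \Gamma \setminus \Gamma$; placing $r$ at a single level-$(n-1)$ vertex and trivial elsewhere produces an element $h_n \in X^n \star \Gamma \leq \Delta_n$ with $h_n \notin \Delta_{n-1}$. By refining the choice of $h_n$ (e.g., exploiting the branching structure to arrange that $h_n$ has $p$-power order normalizing $\Gamma$, so that $\langle \Gamma, h_n \rangle$ has index at most $p^k$ over $\Gamma$), the $p$-group chain above can be built to contain $\langle \Gamma, h_n \rangle \leq M$, forcing $M \not\subseteq \Delta_{n-1}$. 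Since every index-$p^k$ supergroup of $\Gamma$ arising at some smaller $m < n$ lies in $\Delta_m \subseteq \Delta_{n-1}$, the subgroup $M$ differs from all of them, and iterating across $n$ yields $\aleph_0$ distinct subgroups $M$ with $\comIndex(\Gamma, M) = p^k$, completing the lower bound. The delicate point in this final step is verifying that such an $h_n$ can always be selected within the $p$-group chain, which is likely to require a finer counting argument using the Frattini quotient of $\Delta_n / C_n$ relative to $\Gamma/C_n$.
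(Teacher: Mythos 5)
Your upper bound, the verification that $\Gamma \leq \Delta_n := (X^n\star\Gamma)\rtimes Q_n$ with $[\Delta_n:\Gamma]$ a $p$-power tending to infinity, and the existence of at least one $M$ with $\Gamma \leq M \leq \Delta_n$ and $[M:\Gamma]=p^k$ are all correct (and your index computation $[\Delta_n:\Gamma]=[\Delta_1:\Gamma]^{(p^n-1)/(p-1)}$ is a nice touch not in the paper). But the heart of the theorem is producing $\aleph_0$ \emph{distinct} such subgroups, and that is exactly the step you leave as a ``plan.'' The concrete obstruction is this: to force $M \not\leq \Delta_{n-1}$ you want $\langle\Gamma, h_n\rangle \leq M$ for some $h_n \in \Delta_n\setminus\Delta_{n-1}$, which requires $[\langle\Gamma,h_n\rangle:\Gamma]$ to divide $p^k$; nothing in your argument bounds this index. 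Already for $\gamma_1 \in (X\star\Gamma)\setminus\Gamma$ one only knows $[\langle\Gamma,\gamma_1\rangle:\Gamma]=p^{k_1}$ for \emph{some} $k_1\geq 1$, and your parenthetical fix (choose $h_n$ of $p$-power order normalizing $\Gamma$) is unjustified: for $k=1$ every $M$ with $[M:\Gamma]=p$ automatically normalizes $\Gamma$, so what you need amounts to asserting that $N_{\Aut(T_p)}(\Gamma)/\Gamma$ contains infinitely many subgroups of order $p$ --- which is essentially the theorem itself. You acknowledge the hole (``likely to require a finer counting argument''), so the proof is incomplete precisely where it matters.

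The paper sidesteps this difficulty with two devices worth noting. First, instead of building index-$p^k$ overgroups of $\Gamma$, it fixes once and for all a subgroup $H\leq\Gamma$ of index $p^{k-1}$ containing a level stabilizer, and then only ever adjoins a single index-$p$ step: inside $\Gamma_i = \langle\Gamma,\gamma_i\rangle$ with $\gamma_i\in (X^{n_{i-1}}\star\Gamma)\setminus\Gamma$, a finite $p$-group argument always yields $\tilde H_i$ with $H\leq\tilde H_i$ and $[\tilde H_i:H]=p$, regardless of the uncontrolled index $[\Gamma_i:\Gamma]$. Second, distinctness is certified not by non-containment in the $\Delta_n$'s but by finite quotients: using Lemma~\ref{lem:samestabs} one arranges $\stabilizer_{\tilde H_i}(n_i)=\stabilizer_\Gamma(n_i)$, so that $\tilde H_i$ agrees with $H$ modulo $\stabilizer(n_{i-1})$ but not modulo $\stabilizer(n_i)$, making the $\tilde H_i$ pairwise distinct for free. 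Finally, since $[\tilde H_i:H]=p$, either $\tilde H_i\cap\Gamma = H$ (giving $\comIndex(\Gamma,\tilde H_i)=p^{k-1}\cdot p=p^k$) or $\tilde H_i\leq\Gamma$, and only finitely many of the latter can occur. If you want to keep your $\Delta_n$-framework you must supply the missing counting argument; as written there is a genuine gap.
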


\begin{proof}
Fix $k\geq 1$. Proposition~\ref{prop:upperbound} provides an upper bound $\comGrowth_{p^k}(\Gamma, \Aut(T_p))\leq \aleph_0$. To prove the lower bound, fix  a subgroup $H\leq \Gamma$ of index $p^{k-1}$ containing $\stabilizer_{\Gamma}(N)$ for some $N \in \N$. We will construct infinitely many index $p$ extensions of $H$ not contained in $\Gamma$. To find these extensions, we will inductively construct an infinite sequence of pairs $(\tilde{H}_i, n_i)_{i=1}^\infty$ such that
$H \leq \tilde{H}_i$ with $[\tilde{H}_i: H]=p$ and $H/ \stabilizer_{H}(n_{i-1})= \tilde{H}_i/\stabilizer_{\tilde{H}_i}(n_{i-1})$, but $H/ \stabilizer_{H}(n_i)\neq \tilde{H}_i/\stabilizer_{\tilde{H}_i}(n_i)$. 
It is immediate from the latter condition that the $\tilde{H}_i$'s are pairwise distinct.
In the case that $k=1$, in which $H = \Gamma$, this completes the proof. See below for the end of the argument in the case $k\geq 2$.

For the base case of the induction, choose $1\neq \gamma_1 \in (X\star \Gamma)\setminus \Gamma$. Such a $\gamma_1$ exists because $\Gamma$ is finitely generated and thus is not layered by Lemma~\ref{lem:notlayered}. Let $\Gamma_1=\langle \Gamma, \gamma_1 \rangle$. Then $[\Gamma_1: \Gamma]=p^{k_1}$ for some $k_1\geq 1$. To see this, let $Q_1=\langle \sigma \rangle$ be as defined above and recall that Lemma~\ref{lem:samestabs} gives an inclusion  $\stabilizer_{(X\star \Gamma)\rtimes Q_1}(n)\leq \Gamma$ for sufficiently large $n$. Therefore there is a chain of subgroups
\[\stabilizer_{(X\star \Gamma)\rtimes Q_1}(n)\leq \Gamma \leq \Gamma_1 \leq (X\star \Gamma)\rtimes Q_1\]
and the index of $\stabilizer_{(X\star \Gamma)\rtimes Q_1}(n)$ in $(X\star \Gamma)\rtimes Q_1$ is a power of $p$ as $(X\star \Gamma)\rtimes Q_1 \leq \Aut_p(T_p)$.

Now select $\tilde H_1 \leq \Gamma_1$ such that $H \leq \tilde H_1$ and $[\tilde H_1:H] = p$ where $\tilde H_1 = \langle H, \tilde h_1 \rangle$ for some $\tilde h_1$.
Now, since $\Gamma \leq \Gamma_1 \leq (X\star \Gamma)\rtimes Q_1$, by Lemma~\ref{lem:samestabs} there exists $n_1 > N$ with $\stabilizer_{\Gamma_1}(n_1)=\stabilizer_{\Gamma}(n_1)\leq \Gamma$, and therefore $\stabilizer_{\tilde H_1}(n_1) \leq \stabilizer_{\Gamma}(n_1)$.
On the other hand, since $\stabilizer_{\Gamma}(N) \leq H \leq \tilde H_1$, we clearly have $\stabilizer_{\Gamma}(n_1) \leq \stabilizer_{\tilde H_1}(n_1)$.  
Therefore $\stabilizer_{\tilde H_1}(n_1) = \stabilizer_{\Gamma}(n_1)$.
Letting $\pi_{n_1} : \tilde H_1 \to \tilde H_1 / \stabilizer_{\tilde H_1}(n_1)$, we see that $\pi_{n_1}(\tilde h_1) \notin \pi_{n_1}(H).$
This completes the base case of the induction.

Now assume for some $j$, we have built a sequence of pairs $(\tilde{H}_i, n_i)_{i=1}^{j}$ as described above. 
Choose $1\neq \gamma_{j+1}\in (X^{n_j}\star\Gamma) \setminus \Gamma$. Then, as in the argument in the base case,  $\Gamma_{j+1}=\langle \Gamma, \gamma_{j+1}\rangle$ contains $\Gamma$ as a subgroup of index $p^{k_{j+1}}$ for some $k_{j+1}$.
There exists $\tilde h_{j+1}$ such that $\tilde H_{j+1}=\langle \tilde h_{j+1}, H\rangle \leq \Gamma_{j+1}$ contains $H$ as a subgroup of index $p$. Clearly, $\tilde{H}_{j+1}/\stabilizer_{\tilde{H}_{j+1}}(n_j)=H/\stabilizer_{H}(n_j)$ as $\tilde{h}_{j+1}$ stabilizes level $n_j$.
Moreover, again as in the argument in the base case, there exists an $n_{j+1} > N$  such that 
$\stabilizer_{\tilde{H}_{j+1}}(n_{j+1})=\stabilizer_{\Gamma}(n_{j+1})$. Hence, taking 

$$
\pi_{n_{j+1}}: \tilde{H}_{n_{j+1}} \rightarrow \tilde{H}_{n_{j+1}}/\stabilizer_{\tilde H_{n_{j+1}}}(n_{j+1}),
$$
we see that $\pi_{n_{j+1}}(\tilde{h}_{n_{j+1}})\notin \pi_{n_{j+1}}(H)=H/\stabilizer_{H}(n_{j+1})$. The induction is complete.

The proof is complete in the case $k=1$, so consider now the case $k\geq 2$. Each $H_i$ constructed satisfies either $\tilde H_i\cap \Gamma = H$ or $\tilde H_i \leq \Gamma$ with $[\Gamma : \tilde H_i] = p^{k-2}$.
Since there are finitely many subgroups of $\Gamma$ of index $p^{k-2}$ and there are infinitely many distinct $\tilde H_i$ with $[\tilde H_i :H] = p$, we know that there exists an index set $S$ of cardinality $\aleph_0$ such that $\tilde H_i \cap \Gamma = H$ for every $i \in S$.
For every $i \in S$, we then have $\comIndex(\Gamma, \tilde H_i)=[\Gamma:H][\tilde H_i:H]=p^{k-1}p=p^k$, giving us the desired lower bound on $\comGrowth_{p^k}(\Gamma, \Aut(T_p))$.
\end{proof}


\end{document}